\documentclass[12pt,a4paper,psamsfonts]{amsart}
\usepackage[english]{babel}
\usepackage[T1]{fontenc}
\usepackage{fancyhdr}
\usepackage{appendix}
\usepackage{amssymb,amscd,amsxtra,calc}
\usepackage{mathrsfs}
\usepackage{cmmib57}
\usepackage{multirow}
\usepackage[all]{xy}
\usepackage{longtable}
\usepackage[colorlinks=true,linkcolor=blue,anchorcolor=blue,citecolor=blue,pagebackref,linktocpage]{hyperref}
\usepackage{cleveref}
\usepackage{tikz}
\usepackage{cite}
\theoremstyle{plain}
    \newtheorem{thm}{Theorem}[section]

     \newtheorem{conjecture}[thm]{Conjecture}
    \newtheorem{corollary}[thm]{Corollary}

    \newtheorem{proposition}[thm]{Proposition}
    \newtheorem{question}[thm]{Question}
     \newtheorem{problem}[thm]{Problem}
    \newtheorem{theorem}[thm]{Theorem}

\theoremstyle{definition}
    \newtheorem{definition}[thm]{Definition}
     
    \newtheorem{assumption}[thm]{Assumption}
    
    \newtheorem*{notation*}{Notation and Terminology}
      
    \newtheorem{remark}[thm]{Remark}
    
\theoremstyle{remark}

\newcommand{\arxiv}[1]{\href{https://arxiv.org/abs/#1}{{\tt arXiv:#1}}}

\newcommand{\bP}{\mathbb{P}}
\newcommand{\bQ}{\mathbb{Q}}
\newcommand{\bR}{\mathbb{R}}
\newcommand{\bC}{\mathbb{C}}

\newcommand{\NE}{\overline{\operatorname{NE}}}
\newcommand{\Nef}{\operatorname{Nef}}

\newcommand{\NS}{\operatorname{NS}}

\newcommand{\mstriangle}[1]{
\begin{tikzpicture}[x=0.3cm,y=0.3cm]
\draw (-0.4,-0.433) -- (1.4,-0.433);
\draw (-0.2,-0.7794) -- (0.7,0.7794);
\draw (1.2,-0.7794) -- (0.3,0.7794);
\end{tikzpicture}
}
\newcommand{\mssharp}[1]{
\begin{tikzpicture}[x=0.3cm,y=0.3cm]
\draw (-0.8,-0.5) -- (0.8,-0.5);
\draw (-0.8,0.5) -- (0.8,0.5);
\draw (-0.5,-0.8) -- (-0.5,0.8);
\draw (0.5,-0.8) -- (0.5,0.8);
\end{tikzpicture}
}

\makeatletter

\newcommand{\Rmnum}[1]{\expandafter\@slowromancap\romannumeral #1@}
\makeatother

\begin{document}
\title[Bounded cohomology property]
{Bounded cohomology property on Jacobian elliptic surfaces with simplicial Mori cones}
\author{Sichen Li}
\address{
School of Mathematics, East China University of Science and Technology, Shanghai 200237, P. R. China}
\email{\href{mailto:sichenli@ecust.edu.cn}{sichenli@ecust.edu.cn}}

\begin{abstract}
Let $X$ be a Jacobian elliptic surface with finite Mordell-Weil group and exactly one reducible fiber.
We prove that if  $\chi(\mathcal O_X)\ge \rho(X)$, then the Mori cone of $X$ is simplicial.
As an application, assuming additionally $q(X)=0$, we  show that $X$ satisfies the bounded cohomology property (BCP): there exists a constant $c_X>0$ such that
$
h^1(\mathcal O_X(C))\le c_X h^0(\mathcal O_X(C))
$
for every curve $C$ on $X$.
We also establish a necessary and sufficient condition for the BCP to hold on minimal smooth projective surfaces $Y$ with $\kappa(Y)\ge 1$, $q(Y)=0$, and rational polyhedral Mori cones.
\end{abstract}
\keywords{Bounded Negativity Conjecture, Bounded cohomology property, Jacobian elliptic surfaces, simplicial Mori cone, nef cone, rational polyhedral}
\subjclass[2010]{14C20, 14J27}
\maketitle
\section{Introduction}
The Bounded Negativity Conjecture (BNC for short)  is one of the most subtle topological problems in the theory of algebraic surfaces and can be formulated as follows.
\begin{conjecture}
\cite[Conjecture 1.1]{Bauer et al. 2013} 
Let $X$ be a smooth projective surface. Then there exists an integer $b=b(X)\ge0$ such that $C^2\ge-b$ for every curve $C\subseteq X$.
\end{conjecture}
It is well-known that the following SHGH Conjecture implies Nagata's Conjecture \cite{Nagata59}, which is motivated by Hilbert's 14th problem (cf. \cite[Lemma 2.4]{CHMR13}).
\begin{conjecture}
\cite[Conjecture 2.5.1]{Bauer et al. 2012} 
Let $C\subseteq X$ be a curve where $X\to\bP^2$ is the blow-up of general points $p_1,\cdots, p_n$ with $n\ge10$.
Then $h^1(\mathcal O_X(C))=0$.
\end{conjecture}
Motivated by the BNC and the SHGH Conjecture,  Bauer et al. \cite[Conjecture 2.5.3]{Bauer et al. 2012} introduced the following terminology.
\begin{definition}
A smooth projective surface $X$ is said to satisfy the bounded cohomology property (BCP for short), if there exists a constant $c_X>0$ such that $h^1(\mathcal O_X(C))\le c_Xh^0 (\mathcal O_X(C))$ for every curve $C$ on $X$.
\end{definition}
It turns out by Ciliberto et al. \cite{Ciliberto et al. 2017} that the BCP implies the BNC.
Note that a smooth projective surface satisfies the BCP provided that  $-K_X$ is psuedoeffective (cf \cite[Proposition 1.4]{HL26}).
In general, it is  an open problem to classify smooth projective surfaces with the BCP as posted by Ciliberto et al.\cite[Question 6]{Ciliberto et al. 2017}.
\begin{problem}
Classify all smooth projective surfaces with the BCP.
\end{problem}
 In \cite[Claim 2.11]{Li19}, Li listed a classification result of smooth projective surfaces $X$ with Picard number $\rho(X)=2$ and two negative curves, where the closed Mori cone $\NE(X)$ is rational polyhedral.
Later \cite{HL26, Li21, Li23, Li26} characterized  the BCP on smooth projective surfaces  $X$ with $\rho(X)=2$ where either $\NE(X)$ is rational polyhedral or $X$ is a geometrically ruled surface.
The classification of the BCP for surfaces with higher Picard numbers remains a widely open problem.
In this setting, one is generally forced to assume that $\NE(X)$ is rational polyhedral as follows.
\begin{question}
\cite[Question 1.7]{HL26}
\label{Que-Li}
Does every smooth projective surface $X$ satisfy the BCP provided that $\NE(X)$ is rational polyhedral?
\end{question}
Recently, Hua and Li \cite{HL26} established the BCP for Mori dream surfaces, for which $q(X)=0$, $\NE(X)$ is rational polyhedral and every nef divisor is semiample.
Their proof hinges on the condition that every nef extremal ray has positive Iitaka dimension.
Furthermore, in this paper we give the following necessary and sufficient condition for the BCP to hold on minimal smooth  projective surfaces $X$ satisfying $q(X)=0, \kappa(X)\ge1$ and whose $\NE(X)$ is rational polyhedral.
\begin{theorem}
\label{nef-positive-thm}
Let $X$ be a smooth projective surface with $q(X)=0$ such that $\NE(X)$ is rational polyhedral.
Let $\Nef(X)=\sum_{i=1}^n\bR_{\ge0}[C_i]$  where each $C_i$ is an effective divisor on $X$.
Then the following statements holds.
\begin{enumerate}
\item[(a)] Suppose each $C_i$ has $\kappa(X,C_i)\ge1$.
Then $X$ satisfies the BCP.
\item[(b)] 
Suppose $\kappa(X)\ge 1$ and $K_X$ is nef. Then $X$ satisfies the BCP if and only if for any indices $i\neq j$ and any curve $D=a_iC_i+a_jC_j$ with $$a_i>0, \quad 0<2a_j(C_i\cdot C_j)<(K_X\cdot C_i),$$we have
$$
\liminf_{a_i\to+\infty}\frac{h^0(\mathcal O_X(a_iC_i+a_jC_j))}{a_i}>0.
$$
\end{enumerate}
\end{theorem}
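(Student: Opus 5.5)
The approach rests on Riemann--Roch. Since $q(X)=0$, for any curve $C$ we have
\[
h^0(\mathcal O_X(C))-h^1(\mathcal O_X(C))+h^2(\mathcal O_X(C))=\chi(\mathcal O_X)+\tfrac12 C\cdot(C-K_X).
\]
Moreover $h^2(\mathcal O_X(C))=h^0(K_X-C)\le h^0(K_X)=p_g$ because $C$ is effective. So $h^1(\mathcal O_X(C))\le h^0(\mathcal O_X(C))+p_g-\chi(\mathcal O_X)-\tfrac12 C\cdot(C-K_X)$, and the BCP reduces to bounding $-\tfrac12 C\cdot(C-K_X)$ linearly by $h^0(\mathcal O_X(C))$. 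Since $\NE(X)$ is rational polyhedral, there are finitely many negative curves $E_1,\dots,E_m$. Write the Zariski decomposition $C=P+N$, with $P$ nef and $N$ supported on the $E_k$. Then $h^0(C)=h^0(\lfloor P\rfloor)$ up to the usual rounding. Because the $E_k$ form a finite set and the intersection matrix of $N$ is negative definite, $-N^2$ and $|K_X\cdot N|$ are controlled by $P$ up to constants; more precisely, $N$ is determined by $P$ through bounded linear data on the cone. Thus the problem reduces to the nef part $P=\sum a_iC_i$ (after subdividing the nef cone simplicially, at most two or three generators per chamber in the relevant face), for which $P^2\ge0$. In that case the only possible obstruction is the term $\tfrac12 K_X\cdot P$.

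For (a), I would show that $h^0(\mathcal O_X(P))\ge c\,(a_1+\dots+a_n)-c'$. Each $C_i$ has $\kappa(X,C_i)\ge1$, so some multiple $m_iC_i$ moves in a pencil, and $h^0(\mathcal O_X(k m_iC_i))\ge k+1$. Using the multiplication maps $H^0(\mathcal O_X(D_1))\otimes H^0(\mathcal O_X(D_2))\to H^0(\mathcal O_X(D_1+D_2))$ gives $h^0(D_1+D_2)\ge h^0(D_1)+h^0(D_2)-1$, so $h^0$ grows at least linearly in $\sum a_i$. Meanwhile $K_X\cdot P\le (\max_i |K_X\cdot C_i|)\sum a_i$ is linear and $P^2\ge0$. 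Combining these yields $h^1\le c_X h^0$, where the constant absorbs the fixed offsets; the finitely many curves with $h^0=1$ and small coefficients are handled by the finiteness of the negative curves.

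For (b), $K_X$ nef and $q=0$ imply $K_X\cdot C_i\ge0$, and by the above $h^1$ is large only when $\tfrac12(K_X\cdot P-P^2)$ is large. Take $P=a_iC_i+a_jC_j$ on a two-dimensional face. Then
\[
K_X\cdot P-P^2=a_i\bigl(K_X\cdot C_i-a_iC_i^2-2a_j C_i\cdot C_j\bigr)+a_j(\cdots).
\]
The dangerous direction is $a_i\to\infty$ with $C_i^2=0$, and there $K_X\cdot P-P^2$ grows linearly in $a_i$ exactly when $2a_j(C_i\cdot C_j)<K_X\cdot C_i$. If $C_i^2>0$ or the inequality fails, $P^2$ dominates and $h^0\ge\chi+\tfrac12P(P-K_X)-p_g$ already grows. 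Consequently the BCP is equivalent to $h^0$ growing linearly along exactly those rays, which is the stated liminf condition. Necessity follows from Riemann--Roch read backwards: if the liminf is $0$ along a subsequence, then $h^1/h^0\to\infty$. I still need to check that the curves $D=a_iC_i+a_jC_j$ in that family are genuine curves, or else replace them by members of the linear system. For faces spanned by three or more generators, I would argue that $P^2$ grows quadratically unless $P$ lies on a face where only one generator has $C_i^2=0$ and the others appear with bounded coefficients; this is why only pairs appear in the condition.

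The main obstacle is the reduction from $C$ to its nef part. I need $h^1(C)$ to be controlled by $h^1(P)$ plus a term linear in $h^0$. Since $N$ lies in a bounded-dimensional negative definite lattice, I would prove $-N^2+|K_X\cdot N|\le A\cdot(\text{coefficients of }P)+B$ using $P\cdot E_k=0$ and the finiteness of the $E_k$. In (b), the second difficulty is showing that unbounded $a_j$ forces quadratic growth of $P^2$, which requires $C_i\cdot C_j>0$ for distinct extremal nef rays; this holds since two nef classes with $C_i^2=C_j^2=0$ and zero intersection would be proportional by the Hodge index theorem.
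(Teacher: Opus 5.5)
Your overall strategy matches the paper's. You use Riemann--Roch with $h^2(\mathcal O_X(C))\le p_g$ (Proposition \ref{Serre}). In (a) you play a linear upper bound for $K_X\cdot P$ against linear growth of $h^0$ coming from $\kappa(X,C_i)\ge1$. In (b) you use the same three-way split: an unbounded second coefficient, where $C_i\cdot C_j>0$ makes $P^2$ win; the range $2a_j(C_i\cdot C_j)\ge K_X\cdot C_i$, where $(K_X-P)\cdot P$ is bounded above; and the strict range, where the liminf hypothesis is used. Necessity comes from reading Riemann--Roch backwards. Treating nef curves directly lets you bypass Propositions \ref{Num-prop} and \ref{q=0-uniform-prop} in (a), and you correctly handle $C_i^2>0$, which the paper's formula (\ref{KD}) silently drops.

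In (b), however, that shortcut leaves a hole. Your criterion ``dangerous iff $2a_j(C_i\cdot C_j)<K_X\cdot C_i$'' is also met with $a_j=0$, i.e.\ by curves $C\equiv a_iC_i$ on a boundary ray with $C_i^2=0<K_X\cdot C_i$. For these, Riemann--Roch gives $h^1(\mathcal O_X(C))-h^0(\mathcal O_X(C))\ge \tfrac12 a_i(K_X\cdot C_i)-\chi(\mathcal O_X)$. The hypothesis of (b) requires $0<2a_j(C_i\cdot C_j)$, so it says nothing about these curves. Unlike in (a), there is no $\kappa(X,C_i)\ge1$ to fall back on, so identifying what you need with ``the stated liminf condition'' is not accurate. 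The paper covers these curves via Proposition \ref{q=0-uniform-prop} (uniform boundedness of $l_C$ when $C^2=0$).

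You can close the gap directly. Two distinct irreducible curves on the same isotropic ray are disjoint. Since $q(X)=0$, numerically trivial classes are torsion, so some multiples of the two curves are linearly equivalent and span a base-point-free pencil. Any irreducible curve on that ray has zero intersection with the fibre, so it lies in a fibre. By Zariski's lemma (its square being $0$) it is the reduced support of that fibre. By your Hodge-index observation, irreducible curves with $C^2=0$ lie on the finitely many isotropic rays of $\Nef(X)$. Hence $K_X\cdot C$ is uniformly bounded for them, and $h^1\le h^0+\mathrm{const}$.

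The Zariski-decomposition step you call the main obstacle is not needed, and as formulated it is false. A curve here is reduced and irreducible, so it is either nef ($N=0$) or one of the finitely many negative curves ($P=0$), which a constant absorbs. For arbitrary effective divisors, $N$ is not determined by $P$, and $-N^2$ is not controlled by $P$. For example, $C=P+mE$ with $E$ a negative curve and $P\cdot E=0$ has fixed nef part while $-N^2=-m^2E^2\to\infty$.

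Finally, like the paper, you pass tacitly from the $\bR$-class $\sum a_iC_i$ to $h^0$ of integral multiples of the $C_i$. That step requires the integral remainder to be effective, and it needs an argument.
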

\begin{remark}
Let $X$ be a smooth projective surface with $q(X)=0$ and rational polyhedral Mori cone.
Theorem \ref{nef-positive-thm} relates the BCP for $X$ to its nef extremal rays. 
Part (b) reformulates the BCP as an asymptotic condition on section-growth rates for pairs built from these rays, demonstrating that numerical constraints alone are generally insufficient.
By contrast, it is straightforward to establish the BCP for $X$ whenever every nef extremal ray is spanned by a divisor $D_i$ with $\kappa(X,D_i)\ge 1$, as in part (a).
\end{remark}
Since a rational polyhedral Mori cone implies the BNC (cf. \cite[Proposition 1.1]{AL11}), characterizing surfaces with rational polyhedral Mori cones is a fundamental problem.
\begin{problem}
\label{MainProb}
Classify smooth projective surfaces with rational polyhedral Mori cone.
\end{problem}
\begin{remark}
For smooth projective surfaces with rational polyhedral Mori cone, Nikulin \cite{Nikulin00} proved a crucial boundedness theorem: fixing the Picard number \(\rho\), maximal negative self-intersection $$\delta_E(X):=\max\big\{ -C^2 ~\big |~ C \text{ is a  curve on } X\big\},$$ and maximal arithmetic genus \(p_E(X)\) of exceptional curves restricts such surfaces to finitely many isomorphism classes.
 He gave an explicit full classification only for the subcase that \(\delta_E(X)\le 2\) and \(p_E(X)=0\).
  For surfaces with \(\delta_E(X)\ge3\), the complete classification stated in Problem \ref{MainProb} stays largely open.	
\end{remark}
Let $\pi: X\to B$ be a Jacobian elliptic surface.
It is straightforward to show the Mordell-Weil group $\mathrm{MW}(\pi)$ is finite provided that $\NE(X)$ is rational polyhedral (cf. \cite[Proposition 5.14]{HL26}).
Conversely,  finiteness of  $\mathrm{MW}(\pi)$ does not imply rational polyhedrality of $\NE(X)$  in general, see \cite[Theorem 3.2]{LLL26}.
Nevertheless, suppose $\mathrm{MW}(\pi)$ is finite, $\pi$ has exactly one reducible fiber of type $I_n$, and $4\chi(\mathcal O_X)\ge n$.
Then $\NE(X)$ is simplicial by \cite[Corollary 1.4(1)]{LLL26}.
Examples of Jacobian elliptic surfaces satisfying these hypotheses, are given in \cite[Theorems 1.6]{LLL26}.
Building upon these observations, we prove the following result.
\begin{theorem}
\label{Jac-thm}
Let $X$ be a Jacobian elliptic surface with finite Mordell-Weil group and  exactly one reducible fiber. 
\begin{enumerate}
	\item If $\chi(\mathcal O_X)\ge\rho(X)$, then $\NE(X)$ is simplicial. 	
	\item If  $\chi(\mathcal O_X)\ge\rho(X)$ and $q(X)=0$, then $X$ satisfies the BCP.
\end{enumerate}
\end{theorem}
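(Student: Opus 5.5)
Part (1) rests on an explicit description of $\NS(X)$ coming from the Shioda--Tate formula. Since $\mathrm{MW}(\pi)$ is finite, $\rho(X)=2+\sum_v (m_v-1)$. With exactly one reducible fiber $F_0=\Theta_0+\cdots+\Theta_{r}$, where $\Theta_0$ meets the zero section $O$, this gives $\rho(X)=r+2$. The classes $O,\Theta_0,\dots,\Theta_r$ span $\NS(X)_{\bR}$, because $F\equiv \sum m_i\Theta_i$. In fact $O,\Theta_1,\dots,\Theta_r,F$ form a basis of $\NS(X)_\bQ$.

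The plan is to show that
\[
\NE(X)=\bR_{\ge0}[O]+\bR_{\ge0}[F]+\sum_{i=1}^r\bR_{\ge0}[\Theta_i].
\]
This cone has exactly $\rho(X)$ generators, so it is simplicial. Every irreducible curve $C$ is one of three kinds:
\begin{itemize}
\item a fiber component, which lies in the cone because $\Theta_0\equiv F-\sum_{i\ge1}m_i\Theta_i$ can be avoided by writing $C=\Theta_i$ directly;
\item a section;
\item a multisection.
\end{itemize}
For a horizontal curve $C$ we write
\[
C\equiv aO+bF+\sum_{i\ge1} c_i\Theta_i, \qquad a=C\cdot F>0,
\]
and the task is to show $b\ge0$ and $c_i\ge0$. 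Intersecting with the $\Theta_j$ determines the $c_i$ through the negative definite Cartan-type intersection matrix of $\Theta_1,\dots,\Theta_r$. Since $C\cdot\Theta_j\ge0$ and $O\cdot\Theta_j=0$ for $j\ge1$, the $c_i$ are the entries of $-M^{-1}$ applied to a nonnegative vector. For the ADE/$I_n$ lattices, $-M^{-1}$ has positive entries, so $c_i\ge0$.

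The main obstacle is the sign of $b$, and this is where $\chi(\mathcal O_X)\ge\rho(X)$ enters. Using $O^2=-\chi$ and $O\cdot F=1$, we get
\[
b=C\cdot O+a\chi-\sum_i c_i\,(O\cdot\Theta_i)=C\cdot O+a\chi,
\]
since $O$ meets only $\Theta_0$. So if $C\ne O$ then $b\ge a\chi>0$ and we are done. If this computation is actually correct, the hypothesis would seem unnecessary. The catch is that the "basis" may only be rational, and the real issue is the $c_i$: $C$ can meet $\Theta_0$ heavily, and one must bound $\sum_i c_i$ against $b$. I would therefore redo the computation with the basis $O,F,\Theta_1,\dots,\Theta_r$ carefully, using $C^2\ge -2+\dots$ and adjunction $C^2+K_X\cdot C\ge -2$ with $K_X\equiv(\chi-2+2g(B))F$. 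The plan is then to compare $C^2=-a^2\chi+2ab+\dots+c^TMc$ and extract $b\ge0$ as soon as $\chi$ exceeds a bound of size $r+2=\rho$ coming from the entries of $M^{-1}$; the entries of $-M^{-1}$ are bounded by roughly $r$. In effect, the hypothesis is used to exclude curves whose class has a negative $F$-coefficient. This inequality check is where I expect the difficulty to lie.

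For (2) we apply Theorem~\ref{nef-positive-thm}. Dualizing the simplicial cone gives nef generators $D_i$, one for each facet. Each $D_i$ vanishes on all but one of $O,F,\Theta_1,\dots,\Theta_r$. The generator dual to $O$ (vanishing on $F$ and the $\Theta_i$) is a multiple of $F$, and $\kappa(X,F)=1$. Every other generator has positive degree on $F$ and vanishes on $O$; the one dual to $F$ is $O+\chi F+\dots$ with big-type behaviour, i.e.\ $D_i^2>0$. Because $q(X)=0$, Riemann--Roch then gives $h^0(mD_i)\to\infty$, so $\kappa(X,D_i)\ge1$; for the generators with $D_i^2=0$ one argues via $D_i\equiv$ multiples of $F$. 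Part (a) of Theorem~\ref{nef-positive-thm} then yields the BCP.
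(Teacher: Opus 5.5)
\textbf{There is a genuine gap in part (1): the target cone is wrong.} The sign computation meant to support it is also reversed. Since $O,F,\Theta_1,\dots,\Theta_r$ is a $\bQ$-basis, the expression $\Theta_0\equiv F-\sum_{i\ge1}m_i\Theta_i$ is the \emph{unique} one for the $(-2)$-curve $\Theta_0$, and it has negative coefficients. So $\Theta_0\notin\bR_{\ge0}[O]+\bR_{\ge0}[F]+\sum_{i\ge1}\bR_{\ge0}[\Theta_i]$. A curve cannot be ``avoided'' when describing $\NE(X)$. Likewise, $F=\Theta_0+\sum m_i\Theta_i$ is not extremal, so it cannot be a generator of $\NE(X)$.

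For a horizontal curve $C$, intersecting with $\Theta_j$ ($j\ge1$) gives $Mc=v$ with $v_j=C\cdot\Theta_j\ge0$. Hence $c=M^{-1}v=-Q^{-1}v$, where $Q=-M$ is the Cartan matrix. Since $Q^{-1}$ has positive entries, $c_i\le 0$, strictly so as soon as $C$ meets some $\Theta_j$ with $j\ge1$. Your computation $b=C\cdot O+a\chi\ge a\chi$ is correct. So the sign of $b$ is not the real issue, and the adjunction estimate you sketch is not what is needed.

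\textbf{The fix is to use $\Theta_0$ instead of $F$ as a generator.} The paper proves $\NE(X)=\bR_{\ge0}[O]+\sum_{i=0}^r\bR_{\ge0}[\Theta_i]$. In your notation, substituting $F=\Theta_0+\sum m_i\Theta_i$, a horizontal curve $C\ne O$ has $O$-coefficient $a>0$, $\Theta_0$-coefficient $b$, and $\Theta_i$-coefficient $bm_i-(Q^{-1}v)_i$. Because $\sum_{j\ge1}m_jv_j\le C\cdot F=a$ and $b\ge a\chi$, nonnegativity follows from the inequalities $\chi\, m_im_j\ge (Q^{-1})_{ij}$ for all $i,j\ge1$. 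This is exactly where $\chi\ge\rho$ enters, through the explicit bounds on the entries of the inverse Cartan matrices of types $A$, $D$, $E$.

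The paper phrases this dually. The nef generators are $F$, $H=O+\chi F$, and $D_j=m_jH-W_j$ with $W_j=\sum_i (Q^{-1})_{ij}\Theta_i$. The inequality says the $D_j$, i.e.\ the columns of the inverse intersection matrix, have nonnegative coefficients in the curves $O,\Theta_0,\dots,\Theta_r$.

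\textbf{Part (2) inherits the error.} In your basis, the classes dual to the facets opposite $\Theta_j$ are $-\sum_i(Q^{-1})_{ij}\Theta_i=-W_j$. These have degree $0$ on $F$ (not positive, as you claim) and square $-(Q^{-1})_{jj}<0$, so they are not nef. With the correct cone, $F$ has $\kappa=1$, while $H^2=\chi>0$ and $D_j^2=\chi m_j^2-(Q^{-1})_{jj}>0$ under the same bounds. So $H$ and the $D_j$ are big, and Theorem~\ref{nef-positive-thm}(a) applies directly. No Riemann--Roch argument with $q(X)=0$ is needed for the Iitaka dimensions.
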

Let $\pi\colon X\to \mathbb P^1$ be a semistable Jacobian elliptic surface, with singular fibres $F_1,\dots,F_s$ of types $I_{n_1},\dots,I_{n_s}$, and let $\mathcal T$ be the set of their irreducible components.
If $C_0$ denots the zero section, Laface et al. \cite{LLL26} considered the fibration cone
$$
\mathcal C_\pi=\bR_{\ge0}[C_0]+\sum_{T\in \mathcal T} \bR_{\ge0}[T].
$$
Let $F$ denote a general fibre of $\pi$.
Set
\[
\delta(\pi):=\sum_{i=1}^s \frac{\lfloor n_i^2/4\rfloor}{n_i}.
\]
Laface et al.~\cite[Theorem~1.1(1)]{LLL26} proved that $\mathcal C_\pi=\NE(X)$ if and only if $\delta(\pi)\le \chi(\mathcal O_X)$.
Note that if $\delta(\pi)<\chi(\mathcal O_X)$, then every nef extremal ray is either spanned by $F$ or by a nef and big divisor by \cite[Proof of Theorem 1.1(1)]{LLL26}.
Thus the following result is an immediate consequence of Theorem~\ref{nef-positive-thm} (a).
\begin{corollary}
\label{semistable-coro}
Let $\pi: X \to \bP^1$ be a semistable Jacobian elliptic surface.
If $\delta(\pi)<\chi(\mathcal O_X)$, then $X$ satisfies the BCP.
\end{corollary}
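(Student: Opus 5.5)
The plan is to reduce Corollary \ref{semistable-coro} to Theorem \ref{nef-positive-thm}(a). So I need to check its hypotheses: $q(X)=0$, $\NE(X)$ is rational polyhedral, and each extremal ray of $\Nef(X)$ is spanned by an effective divisor of Iitaka dimension at least one.

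\emph{Irregularity and polyhedrality.} Since $X$ is a Jacobian elliptic surface over $\bP^1$ that is semistable, the fibration is non-isotrivial, and a nonsmooth fibre exists because $\chi(\mathcal O_X)>\delta(\pi)\ge 0$. Hence $q(X)=g(\bP^1)=0$. Next, $\delta(\pi)<\chi(\mathcal O_X)$ certainly gives $\delta(\pi)\le\chi(\mathcal O_X)$. By \cite[Theorem~1.1(1)]{LLL26} we then have $\NE(X)=\mathcal C_\pi$. This cone is generated by the finitely many classes $[C_0]$ and $[T]$ for $T\in\mathcal T$, so $\NE(X)$ is rational polyhedral. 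Dually, $\Nef(X)$ is rational polyhedral, so $\Nef(X)=\sum_{i=1}^n\bR_{\ge0}[D_i]$ with finitely many rational rays. Each $D_i$ may be taken to be an integral divisor.

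\emph{Iitaka dimension of each generator.} By \cite[Proof of Theorem 1.1(1)]{LLL26}, under the strict inequality $\delta(\pi)<\chi(\mathcal O_X)$ every nef extremal ray is spanned either by $F$ or by a nef and big divisor. If $D_i=F$, then $|F|$ is the base-point-free pencil defining $\pi$, so $F$ is effective and $\kappa(X,F)=1$. If $D_i$ is nef and big, then $\kappa(X,D_i)=2$. Replacing $D_i$ by a suitable multiple, Riemann--Roch together with bigness gives $h^0(mD_i)>0$, so we may take $D_i$ effective. Each of these rescalings keeps the ray and the Iitaka dimension unchanged.

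With all hypotheses in place, Theorem \ref{nef-positive-thm}(a) applies and $X$ satisfies the BCP. I expect no real obstacle, since the substantive input, namely the description of the nef extremal rays, is taken from \cite{LLL26}. The only place needing care is extracting that description, together with the strictness of the inequality, correctly from the cited proof.
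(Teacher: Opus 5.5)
Your proposal is correct and follows the paper's argument: rational polyhedrality, and the description of the nef extremal rays as spanned by $F$ or by nef and big divisors, come from \cite[Theorem~1.1(1)]{LLL26} and its proof, after which Theorem~\ref{nef-positive-thm}(a) applies; you additionally verify $q(X)=0$ and the effectivity of the generators, which the paper leaves implicit. One caveat applies to both your argument and the paper's: the equality $\NE(X)=\mathcal C_\pi$ can only hold when $\mathrm{MW}(\pi)$ is finite, since by Shioda--Tate $\mathcal C_\pi$ is otherwise not full-dimensional (e.g.\ a rational elliptic surface with twelve $I_1$ fibres has $\delta(\pi)=0<\chi(\mathcal O_X)$ but Mordell--Weil rank $8$), so both arguments tacitly assume this hypothesis.
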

\begin{remark}
When $\delta(\pi)=\chi(\mathcal O_X)$, it is unknown whether  $\kappa(X,D)\ge1$ holds for every divisor $D$ spanning a nonvertical extremal ray of $\Nef(X)$, see~\cite[Problem~1.3]{LLL26}.
\end{remark}
\subsection*{Acknowledgements}
The author would like to thank Antonio Laface for answering questions.
\section{Preliminaries}
\label{Pre}
{\bf Notation and Terminology.}
In this paper, $X$ is a smooth projective surface over $\bC$.
\begin{itemize}
\item By a curve on $X$, we mean a reduced and irreducible curve.
\item A negative curve on $X$ is a curve with negative self-intersection.
 \item A prime divisor $C$ on $X$ is either a nef curve or a negative curve (in the latter case, $h^0(\mathcal O_X(C))=1$).
 \item For every $\bR$-divisor $C$ with $C^2\ne0$ on $X$, we define a value $l_C$ associated to $C$ as follows:
\begin{equation*}
                                                       l_C:=\frac{(K_X\cdot C)}{\max\bigg\{ 1, C^2\bigg\}}.
\end{equation*}
\item For every $\bR$-divisor $C$ with $C^2=0$ on $X$, we define a value $l_C$ associated to $C$ as follows:
\begin{equation*}
                                   l_C:=\frac{(K_X\cdot C)}{\max\bigg\{1,h^0(\mathcal O_X(C))\bigg\}}.
\end{equation*}
\end{itemize}
\begin{definition}
\cite[Definition 2.1]{HL26}
 \label{Defn-l_C}
We say a smooth projective surface $X$ admits uniform boundedness of $l_C$ if there exists a positive constant $m(X)$ such that $l_C\le m(X)$ for every curve $C$ on $X$.
\end{definition}
\begin{definition}
We say the closed Mori cone \(\NE(X)\) is rational polyhedral if there exist finitely many curves $C_1,\cdots, C_s$ on $X$ such that $\NE(X)=\sum \bR_{\ge0}[C_i]$.
And we say $\NE(X)$ is simplicial if it is rational polyhedral and $I(C_1,\cdots, C_n)$ is a non-singular matrix.
\end{definition}
\begin{proposition}
\label{Num-prop}
\cite[Proposition 1.5]{HL26}
Let $X$ be a smooth projective surface.
If $X$ admits the uniform boundedness of $l_C$, and there exists a positive constant $m(X)$ such that either $|C^2|\le m(X)h^0(\mathcal O_X(C))$ or $h^1(\mathcal O_X(C))\le m(X)h^0(\mathcal O_X(C))$ for every curve $C$ on $X$, then $X$ satisfies the BCP.
\end{proposition}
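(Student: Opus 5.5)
The plan is to bound $h^1(\mathcal O_X(C))$ directly from Riemann--Roch, using $h^0(\mathcal O_X(C))\ge 1$ to absorb every constant into a multiple of $h^0$. Fix a curve $C$ on $X$. If the second alternative $h^1(\mathcal O_X(C))\le m(X)h^0(\mathcal O_X(C))$ holds, there is nothing to prove. So assume the first alternative, $|C^2|\le m(X)h^0(\mathcal O_X(C))$.

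Riemann--Roch gives
\[
h^1(\mathcal O_X(C)) = h^0(\mathcal O_X(C)) + h^2(\mathcal O_X(C)) - \chi(\mathcal O_X) - \tfrac12 C^2 + \tfrac12 (K_X\cdot C).
\]
I will bound the terms on the right one at a time.

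\emph{The $h^2$ term.} By Serre duality, $h^2(\mathcal O_X(C))=h^0(\mathcal O_X(K_X-C))$. Since $C$ is effective, $\mathcal O_X(K_X-C)\hookrightarrow\mathcal O_X(K_X)$, so $h^2(\mathcal O_X(C))\le p_g(X)\le p_g(X)\,h^0(\mathcal O_X(C))$.

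\emph{The constant and self-intersection terms.} Similarly, $-\chi(\mathcal O_X)\le |\chi(\mathcal O_X)|\,h^0(\mathcal O_X(C))$. By the assumption, $-\tfrac12C^2\le \tfrac12 m(X)h^0(\mathcal O_X(C))$.

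\emph{The canonical term.} This is the only place the uniform boundedness of $l_C$ (Definition~\ref{Defn-l_C}) enters. If $K_X\cdot C\le 0$, drop the term. Otherwise split according to the sign of $C^2$:
\begin{itemize}
\item If $C^2>0$, then $l_C\le m(X)$ gives $K_X\cdot C\le m(X)\max\{1,C^2\}$. Since $1\le h^0$ and $C^2\le m(X)h^0$, this is at most $m(X)\max\{1,m(X)\}\,h^0(\mathcal O_X(C))$.
\item If $C^2<0$, the definition gives $K_X\cdot C\le m(X)\le m(X)h^0(\mathcal O_X(C))$.
\item If $C^2=0$, the definition of $l_C$ uses $\max\{1,h^0(\mathcal O_X(C))\}=h^0(\mathcal O_X(C))$, so $K_X\cdot C\le m(X)h^0(\mathcal O_X(C))$.
\end{itemize}

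Summing these bounds yields
\[
h^1(\mathcal O_X(C))\le c_X\,h^0(\mathcal O_X(C)),\qquad c_X:=1+p_g(X)+|\chi(\mathcal O_X)|+\tfrac12 m(X)+\tfrac12 m(X)\max\{1,m(X)\}.
\]
Finally take the maximum of $c_X$ and $m(X)$ to cover both alternatives, which establishes the BCP.

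There is no real obstacle here. The one point needing care is the case split on the sign of $C^2$ when bounding $K_X\cdot C$, because $l_C$ is normalized differently in the cases $C^2\neq 0$ and $C^2=0$. The case $C^2>0$ is exactly where the hypothesis $|C^2|\le m(X)h^0$ is needed a second time.
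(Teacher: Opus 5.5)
Your proof is correct and is the standard argument. You combine Riemann--Roch for $\mathcal O_X(C)$ with the Serre-duality bound on $h^2(\mathcal O_X(C))$; the paper packages your separate bounds on $h^2$ and $-\chi(\mathcal O_X)$ as Proposition~\ref{Serre}, namely $h^2(\mathcal O_X(C))-\chi(\mathcal O_X)\le q(X)-1$. You then split on the sign of $C^2$ to match the two normalizations of $l_C$, using $h^0(\mathcal O_X(C))\ge 1$ to absorb constants.

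The paper only quotes this result from \cite{HL26} without proof, but its own proofs of Theorem~\ref{nef-positive-thm} use exactly these ingredients. The one cosmetic fix is to replace the two constants both called $m(X)$ (one from Definition~\ref{Defn-l_C}, one from the hypothesis) by their maximum.
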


The following result is due to Serre duality.
\begin{proposition}\label{Serre}
Let $C$ be a curve on a smooth projective surface $X$.
Then
\begin{equation*} 
 h^2(\mathcal O_X(C))-\chi(\mathcal O_X)\le q(X)-1.	
\end{equation*}
\end{proposition}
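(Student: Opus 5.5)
The statement is Proposition~\ref{Serre}: for a curve $C$ on $X$, $h^2(\mathcal O_X(C))-\chi(\mathcal O_X)\le q(X)-1$. The plan is to apply Serre duality to rewrite $h^2(\mathcal O_X(C))$ as $h^0(\mathcal O_X(K_X-C))$, and then compare this with $h^0(\mathcal O_X(K_X))=p_g(X)$. We have $\chi(\mathcal O_X)=1-q(X)+p_g(X)$, so the inequality to prove is equivalent to
\[
h^0(\mathcal O_X(K_X-C))\le p_g(X).
\]

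This comparison follows because $C$ is an effective nonzero divisor. Multiplication by a defining section $s_C$ of $\mathcal O_X(C)$ gives an injection $\mathcal O_X(K_X-C)\hookrightarrow \mathcal O_X(K_X)$ of sheaves. Taking global sections preserves injectivity, so
\[
h^0(\mathcal O_X(K_X-C))\le h^0(\mathcal O_X(K_X))=p_g(X).
\]

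Substituting back gives
\[
h^2(\mathcal O_X(C))-\chi(\mathcal O_X)\le p_g-(1-q+p_g)=q(X)-1,
\]
which is the claim. There is no real obstacle here. The only point needing care is that $C$ is effective, which holds since a curve is reduced and irreducible; the argument would in fact work for any effective divisor.
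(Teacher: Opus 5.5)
Your argument is correct and is exactly the route the paper intends: the paper only states that the inequality is ``due to Serre duality,'' and you supply the omitted details. These are $h^2(\mathcal O_X(C))=h^0(\mathcal O_X(K_X-C))\le h^0(\mathcal O_X(K_X))=p_g(X)$, using the injection given by a section defining $C$, together with $\chi(\mathcal O_X)=1-q(X)+p_g(X)$.
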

\section{Proof of Theorems \ref{nef-positive-thm} and \ref{Jac-thm}}
\subsection{Proof of Theorem \ref{nef-positive-thm}}
In this subsection, we prove part (a) of Theorem \ref{nef-positive-thm}, following the argument given in \cite[Proof of Theorem 1.9]{HL26}.
\begin{proposition}
\label{q=0-uniform-prop}
Let $X$ be a smooth projective surface with $q(X)=0$.
Then $X$ admits the uniform boundedness of $l_C$ provided that $\NE(X)$ is rational polyhedral.
\end{proposition}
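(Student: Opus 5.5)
The plan is to split the curves $C$ on $X$ according to the sign of $C^2$, and in each case to bound $(K_X\cdot C)$ from above by a constant multiple of the relevant denominator. Since $\NE(X)$ is rational polyhedral, so is its dual $\Nef(X)$. I fix finitely many integral nef classes $N_1,\dots,N_r$ generating $\Nef(X)$, and put $M:=\max\{1,\max_i (K_X\cdot N_i)\}$.

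\emph{Negative curves.} A curve $C$ with $C^2<0$ spans an extremal ray of $\NE(X)$. Since $\NE(X)$ has finitely many extremal rays, there are only finitely many negative curves. For these, $l_C\le (K_X\cdot C)$ is bounded by the maximum over a finite set.

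\emph{Curves with $C^2>0$.} Such a $C$ is nef, so I write $C=\sum_i a_iN_i$ with $a_i\ge 0$. Then $(K_X\cdot C)\le M\sum_i a_i$ and $C^2=\sum_i a_i(N_i\cdot C)$.
\begin{itemize}
\item I claim $(N_i\cdot C)\ge 1$ for every $i$. Indeed, $N_i\cdot C$ is an integer since both classes are integral. It is $\ge 0$ since $N_i$ is nef.
\item If it were $0$, then $C^2>0$ and the Hodge index theorem would give $N_i^2<0$ or $N_i\equiv 0$. This is impossible, because $N_i$ is a nonzero nef class.
\end{itemize}
Hence $C^2\ge\sum_i a_i$, and so $l_C\le M$. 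This case is clean.

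\emph{Curves with $C^2=0$.} Here $C$ is nef and $h^0(\mathcal O_X(C))\in\{1,2\}$. This follows from $h^0(\mathcal O_X(C))=1+h^0(\mathcal O_C(C))$, which uses $q(X)=0$, together with $\deg\mathcal O_C(C)=0$ on the integral curve $C$. So it suffices to show that $(K_X\cdot C)$ is bounded above.
\begin{itemize}
\item I show there are only finitely many isotropic rays in $\Nef(X)$. Write $C=\sum a_iN_i$. Then $0=C^2=\sum a_i(N_i\cdot C)$ forces $N_i\cdot C=0$ whenever $a_i>0$. Hence $N_i\cdot N_j=0$ for all $i,j$ in the support, and in particular every $N_i$ in the support is isotropic.
\item By the Hodge index theorem, two nef isotropic classes with zero intersection are proportional. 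So $C$ lies on one of the finitely many rays $\bR_{\ge0}N_i$ with $N_i^2=0$.
\item Fix such a ray with primitive generator $P$, write $C\equiv mP$, and assume $(K_X\cdot P)>0$ (otherwise there is nothing to show). If at most one curve lies on this ray, there is nothing to bound.
\item If two distinct curves $C,C'$ lie on this ray, then they are numerically proportional. Since $q(X)=0$, some multiples $aC$ and $bC'$ are linearly equivalent after killing torsion, which gives a pencil without base points (because $C\cdot C'=0$).
\item Hence every curve on the ray is a component of a fibre of the resulting fibration $X\to B$. Then $mP\le F$ numerically, so $m\le (F\cdot H)/(P\cdot H)$ for an ample $H$. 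Thus $m$, and with it $(K_X\cdot C)=m(K_X\cdot P)$, is bounded.
\end{itemize}

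Taking $m(X)$ to be the maximum of the constants from the three cases yields the uniform boundedness of $l_C$. I expect the main obstacle to be the isotropic case. The delicate point is producing the fibration from two numerically proportional disjoint curves, where $q(X)=0$ is needed to pass from numerical to linear equivalence up to torsion. The rest is finite bookkeeping plus the Hodge index argument.
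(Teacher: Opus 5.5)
Your proof is correct. Its central step is the same as the paper's: an isotropic nef class $C=\sum a_iN_i$ must lie on a single generator ray, because distinct nef generators meet positively (you use Hodge index, the paper cites \cite[Proposition 5.1.1.1]{ADHL15}), so only finitely many isotropic rays occur.

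The difference is that the paper delegates everything else to \cite{Li23}. Proposition 2.8 there disposes of curves with $C^2\neq 0$, and Proposition 2.7 bounds $l_D$ on each isotropic ray. You prove both directly.

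\begin{itemize}
\item Your integrality argument for $C^2>0$ ($N_i\cdot C\ge 1$, so $C^2\ge\sum a_i$ while $K_X\cdot C\le M\sum a_i$) is clean and does not even need $q(X)=0$.
\item On the isotropic rays you note that $q(X)=0$ forces $h^0(\mathcal O_X(C))\le 2$. So the normalisation in $l_C$ gives nothing, and $K_X\cdot C$ itself must be bounded.
\item You achieve this by building a base-point-free pencil from two distinct curves on the ray, then bounding $H\cdot C\le H\cdot F$. The pencil exists because $\mathrm{Pic}^0(X)=0$, so numerically trivial classes form the finite group $\NS(X)_{\mathrm{tors}}$.
\end{itemize}

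The paper's version is shorter but does not show where $q(X)=0$ is used. Yours is self-contained and isolates that hypothesis in the construction of the pencil.

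Two phrasing fixes:
\begin{itemize}
\item ``$mP\le F$ numerically'' should say that $F_b-C$ is effective for the fibre $F_b$ containing $C$, whence $m(H\cdot P)=H\cdot C\le H\cdot F$.
\item The bound $h^0\in\{1,2\}$ is not needed for the upper estimate. Since $\max\{1,h^0\}\ge 1$, you already get $l_C\le\max\{0,K_X\cdot C\}$.
\end{itemize}
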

\begin{proof}
To show the uniform boundedness of $l_C$, by \cite[Proposition 2.8]{Li23}, we may take a nef divisor   $D$ with $D^2=0$.
Note that $\Nef(X)=\sum_{i=1}^k \bR_{\ge0}[C_i]$ and each $C_i$ is a nef divisor since $\NE(X)$ is rational polyhedral.
Note that $C_i\neq rC_j$ with $i\ne j$ for any $r\in \bQ_{>0}$.
Then $C_i\cdot C_j>0$ by \cite[Proposition 5.1.1.1]{ADHL15}.
As a result, for a nef curve $D=\sum_{i=1}^r a_iC_i$ with each $a_i>0$, we have $D^2>0$ if $r\ge2$.
So $D=a_kC_k$ with $a_k>0$ since $D^2=0$.
Then there exists a positive constant $m(X)$ such that $l_D\le m(X)$ for every nef curve $D$ with $D^2=0$ by \cite[Proposition 2.7]{Li23}.
In all, $X$ admits the uniform boundedness of $l_C$.
\end{proof}
\begin{proof}[Proof of part (a) of Theorem \ref{nef-positive-thm}]
To show the BCP for $X$,  by Propositions \ref{Num-prop} and \ref{q=0-uniform-prop}, it suffices to show there exist a constant $c_X>0$ such that either $D^2\le c_Xh^0(\mathcal O_X(D))$ or $h^1(\mathcal O_X(D))\le c_Xh^0(\mathcal O_X(D))$ for every curve $D$ with $D^2>0$  on $X$.
Note that $\Nef(X)=\sum_{i=1}^k \bR_{\ge0}[C_i]$ where  each $C_i$ is an effective divisor with $\kappa(X,C_i)\ge1$.
Take $D=\sum_{i=1}^r a_iC_i$ with each $a_i>0$.
If $k=1$, then $\rho(X)=1$ and every nef divisor is ample.
Then $X$  satisfies the BCP by \cite[Proposition 2.6]{Li23}.

Now we assume that $k\ge2$.
Take a nef and big curve $D=\sum_{i=1}^r a_iC_i$ with each $a_i>0$.
Without loss of generality, we may assume that \(a_1\ge a_2\ge\cdots \ge a_r\) and \(a_1\gg 1\).
If \(a_1\) were bounded above by $k_X$, then 
\begin{equation*}
\begin{split}
D^2 &=\sum_{i=1}^r a_i^2C_i^2+\sum_{1\le i<j\le r}2(C_i\cdot C_j)
 \\ & \le  c_Xh^0(\mathcal O_X(D)),
\end{split}
\end{equation*}
where $$c_X:=k_X^2(r+r^2)\max\big\{C_i^2,\; C_i\cdot C_j \;\big|\; 1\le i\le r,\; 1\le i<j\le r\big\}.$$
 So we may assume that  \(a_1\gg 1\) to complete the proof.
 
Note that
\begin{equation}
\begin{split}
\label{MDS-eq1}
	(K_X-D)D&\le \sum_{i=1}^r a_i(K_X\cdot C_i)
	\\&\le \bigg(\sum_{i=1}^r|K_X\cdot C_i|\bigg)a_1.
\end{split}
\end{equation}
 By \cite[Corollary 2.1.38]{Lazarsfeld04} and $\kappa(X,C_1)\ge1$, for $a_1\gg1$, there exist two positive constants $c=(2c_X-2)^{-1}\sum_{i=1}^r|K_X\cdot C_i|$ and $c_X>1$  (which are independent of the choice of $D$) such that 
\begin{equation}
\label{MDS-eq2}
	 h^0(\mathcal O_X(D))\ge h^0(\mathcal O_X(a_1C_1))\ge\frac{1}{2c_X-2}\bigg(\sum_{i=1}^r|K_X\cdot C_i|\bigg)a_1.
\end{equation}
Then by Riemann-Roch Theorem, (\ref{MDS-eq1}), (\ref{MDS-eq2}) and Proposition \ref{Serre}, we have 
\begin{equation*}\begin{split}
	  h^1(\mathcal O_X(D))&=h^0(\mathcal O_X(D))+h^2(\mathcal O_X(D))+\frac{(K_X-D)\cdot D}{2}-\chi(\mathcal O_X)\\&\le(c_X+q(X))h^0(\mathcal O_X(D))
	  \\&\le c_Xh^0(\mathcal O_X(D)),
\end{split}
 \end{equation*}
 where we use $q(X)=0$.
 In all, $X$ satisfies the BCP.
\end{proof}
\begin{proof}[Proof of part (b) of Theorem \ref{nef-positive-thm}]
 ($\Leftarrow$)
To show the BCP for $X$,  by Propositions \ref{Num-prop} and \ref{q=0-uniform-prop}, it suffices to show there exist a constant $c_X>0$ such that either $D^2\le c_Xh^0(\mathcal O_X(D))$ or $h^1(\mathcal O_X(D))\le c_Xh^0(\mathcal O_X(D))$ for every curve $D$ with $D^2>0$  on $X$.
Note that $\Nef(X)=\sum_{i=1}^k \bR_{\ge0}[C_i]$ with each curve $C_i$ with $\kappa(X,C_i)\ge1$.
Take $D=\sum_{i=1}^r a_iC_i$ with each $a_i>0$.
If $k=1$, then $\rho(X)=1$, and every nef divisor is ample.
Then $X$  satisfies the BCP by \cite[Proposition 2.6]{Li23}.

Now we assume that $k\ge2$.
Take a nef and big curve $D=\sum_{i=1}^r a_iC_i$ with each $a_i>0$.
Without loss of generality, we may assume that \(a_1\ge a_2\ge\cdots \ge a_r\) and \(a_1\gg 1\).
If \(a_1\) were bounded above by $k_X$, then 
\begin{equation*}
\begin{split}
D^2 &=\sum_{i=1}^r a_i^2C_i^2+\sum_{1\le i<j\le r}2(C_i\cdot C_j)
 \\ & \le  c_Xh^0(\mathcal O_X(D)),
\end{split}
\end{equation*}
where $$c_X:=k_X^2(r+r^2)\max\big\{C_i^2,\; C_i\cdot C_j \;\big|\; 1\le i\le r,\; 1\le i<j\le r\big\}.$$
 So we may assume that  \(a_1\gg 1\).
 
  By  Riemann-Roch theorem, we have
\begin{equation}
\begin{split}
\label{BCP-eq0}
	h^1(\mathcal O_X(D))&=h^0(\mathcal O_X(D))+\frac{(K_X-D)\cdot D}{2}+h^2(\mathcal O_X(D))-\chi(\mathcal O_X)
\end{split} 
\end{equation}
 Since $X$ is minimal and $\kappa(X)\ge1$, then 
 \begin{equation}
 \label{K_X-eq}
 K_X\cdot C_i>0, ~i=1,\cdots, k; \quad C_i\cdot C_j>0, ~1\le i<j\le k;	
 \end{equation}
by \cite[Proposition 5.1.1.1]{ADHL15}.
Note that
\begin{equation}
\begin{split}
\label{BCP-eq1}
	(K_X-D)\cdot D&=\sum a_i(K_X\cdot C_i)-\sum a_i^2C_i^2-\sum_{ i<j}2a_ia_j(C_i\cdot C_j).
	\end{split}
\end{equation}
If $a_2\gg1$, then by (\ref{K_X-eq}) and (\ref{BCP-eq1}), $(K_X-D)\cdot D\le0$.
 By (\ref{BCP-eq0}) and Proposition \ref{Serre}, we have $h^1(\mathcal O_X(D))\le h^0(\mathcal O_X(D))$.
Now we assume that $a_2\le N_1$.
If $2a_2(C_1\cdot C_2)\ge (K_X\cdot C_1)$, then (\ref{BCP-eq1}) implies that
\begin{equation}
\begin{split}
\label{KD-I}
	(K_X-D)D &\le a_1((K_X\cdot C_1)-2a_2(C_1\cdot C_2))+\sum_{j=2}^na_j(K_X\cdot C_j)
	\\&\le N_1\sum_{j=2}^n(K_X\cdot C_j).
\end{split}
\end{equation}
 (\ref{BCP-eq0}), (\ref{KD-I}) and Propostion \ref{Serre} imply that 
$$
2h^1(\mathcal O_X(D))\le (2+N_1\sum_{j=2}^n(K_X\cdot C_j))h^0(\mathcal O_X(D)).
$$
Now we may assume that $2a_2(C_1\cdot C_2)<(K_X\cdot C_1)$.
We now use the inequality
\begin{equation}
\label{BCP-eq2}
\liminf_{a_1\to+\infty}\frac{h^0(\mathcal O_X(a_1C_1+a_2C_2))}{a_1}>0 \textup{ with }  0<2a_2(C_1\cdot C_2)<(K_X\cdot C_1)
\end{equation}
to finish the proof of this part.
By (\ref{BCP-eq2}), there is a positive integer $N_2>0$ and constant $c_X>0$, such that for each $a_1\ge N_2$, we have
\begin{equation}
\label{BCP-eq3}
	 h^0(\mathcal O_X(a_1C_1+a_2C_2))\ge (c_X-1)^{-1}ta_1.
\end{equation}
By (\ref{K_X-eq}) and (\ref{BCP-eq1}), $(K_X-D)\cdot D\le ta_1$.
Let $t:=\sum K_X\cdot C_i$.
Then by  (\ref{BCP-eq0}), (\ref{BCP-eq3}) and Proposition \ref{Serre}, we have 
\begin{equation*}\begin{split}
	  h^1(\mathcal O_X(D))&=h^0(\mathcal O_X(D))+h^2(\mathcal O_X(D))+\frac{(K_X-D)\cdot D}{2}-\chi(\mathcal O_X)\\&\le
	  h^0(\mathcal O_X(D))+ta_1
	  \\&\le c_Xh^0(\mathcal O_X(D)).
\end{split}
 \end{equation*}

 $(\Rightarrow)$. 
 Since $X$ satisfies the BCP, there exists a constant $c_X>1$ such that for every curve $D=a_iC_i+a_jC_j$ with $a_i>0$ and $0<2a_j(C_i\cdot C_j)< (K_X\cdot C_i)$,  we have
 \begin{equation}
 \label{BCP-eq4}
  h^1(O_X(D))\le c_Xh^0(\mathcal O_X(D)).
 \end{equation}
Note that 
\begin{equation}
\label{KD}
(K_X-D)\cdot D=a_i((K_X\cdot C_i)-2a_j(C_i\cdot C_j))+a_j(K_X\cdot C_j)-a_j^2C_j^2
\end{equation}
Since $2a_j(C_i\cdot C_j)<(K_X\cdot C_i)$, (\ref{KD}) implies that
 \begin{equation}
 \label{BCP-eq5}
  \liminf_{a_i\to+\infty}\frac{(K_X-D)D}{a_i}>0.
 \end{equation}
So (\ref{BCP-eq0}), (\ref{BCP-eq4}), (\ref{BCP-eq5})  and Proposition \ref{Serre} imply that 
\begin{equation*}
\liminf_{a_i\to\infty}\frac{h^0(\mathcal O_X(a_iC_i+a_2C_i)}{a_i}\ge \frac{1}{(c_X-1)}\liminf_{a_i\to+\infty}\frac{(K_X-D)\cdot D}{a_i}>0.
\end{equation*}
Therefore, we completes the proof of Theorem \ref{nef-positive-thm} (b).
\end{proof}
\subsection{Proof of Theorem \ref{Jac-thm}}
\begin{definition}
 Let $X$ be a smooth projective surface.
Let $\pi: X \to B$ be a surjective morphism with connected fibres.
We call $\pi: X\to B$ a fibration on the surface $X$ with the base curve $B$.
We say $X$ is an \emph{elliptic surface} if the general fiber of $\pi$ is an elliptic curve, and $\pi$ is \emph{relatively minimal} if no fiber contains a $(-1)$-rational curve.
 \end{definition}
For convenience,  all elliptic surfaces throughout this paper are assumed to be relatively minimal, and we will always exclude the trivial product case (cf. \cite[Example 5.6]{SS19})  by the following assumption:
\begin{assumption}
Any elliptic surface $\pi: X\to B$ is relatively minimal, and $\pi$ has a singular fibre.
Thus, we have that $\chi(\mathcal O_X)>0$ by \cite[Corollary 5.50]{SS19}.
\end{assumption}
 \begin{definition}
 Given an algebraic surface fibration $\pi: X\to B$, a \emph{section of $\pi$} is a morphism $\sigma: B \to X$ such that $\pi\circ\sigma$ is the identity map of $B$.
 An elliptic surface $\pi: X\to B$  is \emph{Jacobian} if $\pi$ has a section. 
 Let $\mathrm{MW}(\pi)$ be the Mordell-Weil group of $\pi: X\to B$, i.e., the group of sections, $O$ being the zero section.
 \end{definition}
 \begin{proposition}
\label{jac-rho-prop}
\cite[Proposition 2.4]{LLL26}
Let $\pi\colon X\to B$ be a Jacobian elliptic fibration.
Let $F_1,\ldots,F_s$ be the reducible fibers of $\pi$, and let
$C_{i,0},\ldots,C_{i,m_i-1}$ be the irreducible components of $F_i$, where
$C_{i,0}$ is met by the zero section $C_0$. Let $F$ be a
general fiber. If $\mathrm{MW}(\pi)$ is finite, then
\[
\rho(X)=2+\sum_{i=1}^s(m_i-1),
\]
and the classes of $C_0,F$, and $C_{i,j}$, with
$1\le i\le s$ and $1\le j\le m_i-1$, form a basis of
$\NS_\bQ(X)$.
\end{proposition}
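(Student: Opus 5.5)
The plan is to derive this from the Shioda--Tate formula and the structure of the trivial lattice. Let $T\subseteq \NS(X)$ be the \emph{trivial lattice}, spanned by the classes of $C_0$, $F$, and the fibre components $C_{i,j}$ with $1\le i\le s$ and $1\le j\le m_i-1$. The components $C_{i,0}$ are deliberately left out, since $C_{i,0}=F-\sum_{j\ge1}\mu_{i,j}C_{i,j}$ in $\NS(X)$, where the $\mu_{i,j}$ are the fibre multiplicities. Irreducible fibres contribute nothing beyond $F$. The Shioda--Tate theorem (e.g.\ in \cite{SS19}) gives an isomorphism $\mathrm{MW}(\pi)\cong \NS(X)/T$. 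In particular,
\[
\rho(X)=\operatorname{rank}T+\operatorname{rank}\mathrm{MW}(\pi).
\]
Since $\mathrm{MW}(\pi)$ is finite, its rank is $0$. So $\rho(X)=\operatorname{rank}T$, and $T\otimes\bQ=\NS_\bQ(X)$.

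It then remains to show that the $2+\sum_i(m_i-1)$ listed classes are linearly independent. Then $\operatorname{rank}T$ equals their number, which gives both the formula for $\rho(X)$ and the basis claim. I would prove independence by showing that the intersection matrix is nonsingular, in three steps.

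\begin{enumerate}
\item We have $F^2=0$ and $C_0\cdot F=1$. We also have $C_0^2=-\chi(\mathcal O_X)$, or just some integer, which is all that is needed. So the Gram matrix of $\langle C_0,F\rangle$ is $\begin{pmatrix} C_0^2 & 1\\ 1 & 0\end{pmatrix}$, which has determinant $-1$.
\item Each $C_{i,j}$ with $j\ge1$ is orthogonal to $F$, because it lies in a fibre. It is also orthogonal to $C_0$, because the section meets each fibre in exactly one point, which lies on the multiplicity-one component $C_{i,0}$.
\item For fixed $i$, the components $C_{i,1},\dots,C_{i,m_i-1}$ span a negative definite lattice, namely the root lattice $A_{n}$, $D_n$ or $E_n$ obtained by deleting the identity component from the extended Dynkin diagram. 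Components of distinct fibres are orthogonal.
\end{enumerate}

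These three facts make the Gram matrix block-diagonal, with one block of determinant $-1$ and negative definite blocks. Hence it is nonsingular, and the classes are independent in $\NS_\bQ(X)$.

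The main point needing care is the negative definiteness in step (3). I would justify it by Zariski's lemma: on a fibre, the intersection form is negative semidefinite, and its kernel is spanned by the full fibre class. Any proper subset of components, in particular all components except $C_{i,0}$, therefore spans a negative definite sublattice. Everything else is formal once Shioda--Tate is granted. If one preferred to avoid quoting Shioda--Tate, the alternative would be to show directly that the orthogonal complement of $T$ injects into $\mathrm{MW}(\pi)\otimes\bQ$ via $D\mapsto D|_{X_\eta}$. That is the harder step, so I would simply cite the theorem.
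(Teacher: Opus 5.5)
Your proposal is correct. Since $\operatorname{rank}\mathrm{MW}(\pi)=0$, the Shioda--Tate theorem makes the trivial lattice of finite index in $\NS(X)$, and the block-diagonal Gram matrix shows the listed classes are independent: the $\langle C_0,F\rangle$ block has determinant $-1$, and the fibre blocks are negative definite by Zariski's lemma.

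The paper gives no argument of its own and simply cites \cite[Proposition 2.4]{LLL26}. Your Shioda--Tate route is the standard justification of that statement. The trivial-trace form of Shioda--Tate you quote does apply here, because finiteness of $\mathrm{MW}(\pi)$ already excludes the product case.
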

\begin{proof}[Proof of Theorem \ref{Jac-thm}]
Let $F$ be the general fiber.
Let $\chi=\chi(\mathcal O_X)$ and $\rho=\rho(X)$.
Note that $\rho\ge3$ since $\pi$ has exactly one reducible fiber.
Let $C_0$ be the zero section, let $F_1$ be the uniquely reducible fiber.
Note that $C_0^2=-\chi$ by \cite[Corollary 5.45]{SS19}.
Let $C_1,\cdots, C_n$ be all irreducible components of $F_1$ and $C_1\cdot C_0=1$.
By Proposition \ref{jac-rho-prop}, $\rho(X)=n+1$ and $\NS_\bQ(X)$ is generated by $\bQ$-linear independent curve $C_0,F$ and all $C_2,\cdots, C_n$.
Then $I(C_0, F,C_2,\cdots, C_n)$ splits into a two dimensional block generated by $C_0$ and $F$, and  $-Q$, where $Q=(q_{ij})$ is the Cartan matrix with  with $C_2,\cdots, C_n$, where $q_{ik}=-(C_i\cdot C_k)$.
According to Kodaira's classification of singular fibers (cf.\ \cite{Kodaira63}, see also \cite[Definition 7.20]{Esole17}), $Q$ is of type $A_{n-1}$, $D_{n-1}$, $E_6$, $E_7$, or $E_8$.

Now write \(F=C_1+\sum_{i\ge 2}m_i C_i\), where the multiplicities \(m_i\) depend on the type of the reducible fibre.
The divisor $H=C_0+\chi F$ is nef and orthogonal to $C_0,C_2,\cdots, C_s$.
Thus, it is dual to the facet opposite $C_1$.
The fiber is  similarly dual to the facet opposite $C_0$.
For $j\ge2$, the divisor dual to the facet opposite $C_j$ written as $D_j=m_jH-W_j$, where $W_j$ is the unique $\bQ$-divisor supported on $C_2,\cdots, C_n$ such that $W_j\cdot C_k=-\delta_{jk}$ for every $k\ge 2$.
Then  we have
\begin{equation}
\label{W_j-eq}
W_j=\sum_{i\ge2}(Q^{-1})_{ij}C_i.
\end{equation}
Substituting $H = C_0+\chi F$, $F=C_1+\sum_{i\ge2}m_iC_i$ and (\ref{W_j-eq}) into the  the expression $D_j = m_j H - W_j$, we obtain
\[
D_j
= m_j C_0
+\chi m_j C_1
+\sum_{i\ge 2}(\chi m_i m_j-(Q^{-1})_{ij})C_i.
\]
The coefficient of $C_i$ for $i\ge 2$ in $D_j$ is therefore $\chi\, m_i m_j -(Q^{-1})_{ij}$.
Note that 
Nonnegativity of all coefficients of $D_j$ is equivalent to the inequalities
\begin{equation}
\label{coe-iq}
	\chi\, m_i m_j \ge (Q^{-1})_{ij} \quad \text{for all} \quad i,j\ge 2.
\end{equation}
By the root-system correspondence, the intersection matrix $Q$ of components of the reducible fiber is the Cartan matrix of the finite Dynkin type $A_{n-1}$, $D_{n-1}$, $E_6$, $E_7$, or $E_8$.
The fundamental dominant weights, whose expansions in simple roots are tabulated in \cite[\S13.2, Table 1, p.\,69]{Humphreys72} provide the columns of $Q^{-1}$.
We obtain the following bounds for the entries $(Q^{-1})_{ij}$:
\begin{enumerate}
\item[(i)] If $Q$ is of type $A_{n-1}$, then $\rho=n+1$ and $$(Q^{-1})_{ij}=\frac{(\min\{i,j\}-1)(n-\max\{i,j\}+1)}{n}\le \frac{n}{4}<\rho.$$
\item[(ii)] If $Q$ is of type $D_{n-1}$, then $\rho=n+1$ and $$
(Q^{-1})_{ij}=\min\{i,j\}, 1\le j\le n-2; \quad (Q^{-1})_{i,n-1}=\frac{i}{2}.
$$
As a result, $(Q^{-1})_{ij}\le n-2< \rho$.
\item[(iii)] If $Q$ is of type $E_6$, then $\rho=8$ and  $(Q^{-1})_{ij}\le 6$.
\item[(iv)] If $Q$ is of type $E_7$, then $\rho=9$, $m_im_j\ge2$ and  $(Q^{-1})_{ij}\le 12$.
\item[(v)] If $Q$ is of type $E_8$, then $\rho=10$, $m_im_j\ge4$ and $(Q^{-1})_{ij}\le 30$.
\end{enumerate}
Thus, if $\chi\ge\rho$, by (\ref{coe-iq}), the inverse of $A:=I(C_0,C_1,\cdots, C_m)$ is  coefficientwise positive.
Let $\Gamma$ be an irreducible curve different from the $C_i$, and put $v=(\Gamma\cdot C_0,\cdots, \Gamma\cdot C_m)^t$.
All entries of $v$ are nonnegative.
If $[\Gamma]=\sum a_i[C_i]$, then $a=A^{-1}v$, so every $a_i\ge0$.
Thus, $\NE(X)=\sum_{i=0}^m\bR_{\ge0}[C_i]$, and it is simplicial.

Now we assume that $\chi\ge\rho$ and $q(X)=0$.
We  note that 
\begin{equation*}
	\Nef(X)=\bR_{\ge0}[F]+\bR_{\ge0}[H]+\sum_{j\ge2}\bR_{\ge0}[D_j], \quad \kappa(X,F)=1, \quad \kappa(X,H)=\kappa(X,D_j)=2.	
\end{equation*}
Then $X$ satisfies the BCP by Theorem \ref{nef-positive-thm} (a).
\end{proof}


\begin{thebibliography}{99}

\bibitem{ADHL15}
I. Arzhantsev, U. Derenthal, J. Hausen, and A. Laface,
\emph{Cox Rings}, 
Cambridge Studies in Advanced Mathematics, \textbf{144}, Cambridge University Press, 2015.

\bibitem{AL11}
M. Artebani and A. Laface,
\emph{Cox rings of surfaces and the anticanonical Iitaka dimension},
Adv. Math. \textbf{226} (2011), no.~6, 5252--5267.

\bibitem{Bauer et al. 2012}
T. Bauer, C. Bocci, S. Cooper, S. D. Rocci, M. Dumnicki, B. Harbourne, K. Jabbusch, A. L. Knutsen,
A. K$\mathfrak{\ddot{u}}$ronya, R. Miranda, J. Ro$\mathrm{\acute{e}}$, H. Schenck, T. Szemberg, and Z. Teithler,
\emph{Recent developments and open problems in linear series,}
In: Contributions to Algebraic Geometry, p. 93-140, EMS Ser. Congr. Rep., Eur. Math. Soc., Z\"{u}rich, 2012.

\bibitem{Bauer et al. 2013}
T. Bauer, B. Harbourne, A. L. Knutsen, A. K$\mathrm{\ddot{u}}$ronya, S. M$\mathrm{\ddot{u}}$ller-Stach, X. Roulleau, and T. Szemberg,
\emph{Negative curves on algebraic surfaces,}
Duke Math. J. \textbf{162}(10)(2013), 1877-1894.

\bibitem{Ciliberto et al. 2017}
C. Ciliberto, A. L. Knutsen, J. Lesieutre, V. Lozovanu, R. Miranda, Y. Mustopa, and D. Testa,
\emph{A few questions about curves on surfaces,} Rend. Circ. Mat. Palermo. II. Ser. \textbf{66} (2)(2017),195-204.

\bibitem{CHMR13}
C. Ciliberto, B. Harbourne, R. Miranda, and J. Ro\'e,
\emph{Variations on Nagata's Conjecture},
Clay Math. Proc. \textbf{18} (2013), 185-203.

\bibitem{Esole17}
M. Esole,
\emph{Introduction to elliptic fibrations}, Quantization, Geometry and Noncommutative Structures in Mathematics and Physics, 
Springer International Publishing, Cham. pp. 247-276.

\bibitem{HL26}
Z. Hua and S. Li, 
\emph{Smooth projective surfaces with bounded cohomology property},  Internat. J. Math. (2026), \arxiv{2306.07830v5}.
  
 \bibitem{Humphreys72}
 J. Humphreys, 
 \emph{Introduction to Lie algebras and representation theory},
GTM \textbf{9}, Springer-Verlag, New York, 1972.

  \bibitem{Kodaira63}
K. Kodaira,
\emph{On compact analytic surfaces II-III},
Ann. Math. \textbf{77} (1963), 563-626.

\bibitem{Lazarsfeld04}
R. Lazarsfeld, \emph{Positivity in algebraic geometry, I}, Ergebnisse der
  Mathematik und ihrer Grenzgebiete. \textbf{48}, Springer-Verlag, Berlin, 2004.
  
 \bibitem{Li19}
S. Li,
\emph{A note on a smooth projective surface with Picard number 2,}
Math. Nachr. \textbf{292} (2019), no.~12, 2637-2642.

\bibitem{Li21}
S. Li,
\emph{Bounding cohomology on a smooth projective surface with Picard number 2,}
Commun. Algebra \textbf{49} (2021), no. 7, 3140-3144.

\bibitem{Li23}
S. Li,
\emph{Bounded cohomology property on a smooth projective surface with Picard number two}, Commun. Algebra. \textbf{51} (2023), no. 12, 5235-5241.

\bibitem{Li26}
S. Li,
\emph{Smooth projective surfaces with bounded cohomology property II}, 
\arxiv{2608.12005}.

\bibitem{LLL26}
A. Laface, S. Li and J. Liu,
\emph{Mori dream Jacobian elliptic surfaces of Kodaira dimension one}, \arxiv{2608.09002v2}.

\bibitem{Nikulin00}
V. Nikulin,
\emph{A remark on algebraic surfaces with polyhedral Mori cone},
Nagoya Math. J. \textbf{157} (2000), 73-92.

\bibitem{SS19}
M. Sch\"utt and T. Shioda,
\emph{Mordell-Weil lattices},
 Ergeb. Math. Grenzgeb.  \textbf{70}. Springer, Singapore, 2019. 
 
\bibitem{Nagata59}
M. Nagata,
\emph{On the 14th problem of Hilbert},
Amer. J. Math. \textbf{81} (1959),766-772.

\end{thebibliography}
\end{document}